\newtheorem{theorem}{Theorem}[section]
\newtheorem{lemma}[theorem]{Lemma}
\newtheorem{proposition}{Proposition}
\theoremstyle{definition}
\newtheorem{definition}[theorem]{Definition}
\newtheorem{remark}{Remark}
\title[Transport equation and zero quadratic..] 
      {The transport equation and zero quadratic variation processes}
\author[Jorge Clarke, Christian Olivera and Ciprian Tudor]{}
\subjclass{ Primary 60H15: Secondary 60H05, 60H07.}
 \keywords{ Transport equation, fractional Brownian motion, Hermite process, zero quadratic variation process, Malliavin calculus, stochastic calculus via regularization, method of characteristics, existence of density.}
 \email{jorge.clarke@usm.cl}
 \email{colivera@ime.unicamp.br}
 \email{tudor@math.univ-lille1.fr}
\thanks{The first author is supported by FONDECYT grant N\textdegree 3150506.} 
\thanks{The second author is supported by FAPESP 2015/04723-2   and CNPq through the grant 460713/2014-0.}
\thanks{The third author is supported by the CNCS grant PN-II-ID-PCCE-2011-2-0015.}
\thanks{Dedicated to Bj\"orn Schmalfuss on  occasion of his 60th birthday.}
\thanks{$^*$ Corresponding author: Jorge Clarke}
\begin{document}
\maketitle

\centerline{\scshape Jorge Clarke $^*$}
\medskip
{\footnotesize
 \centerline{Departamento de Matem\'atica,
 Universidad T\'ecnica Federico Santa Mar\'ia.}
 \centerline{Avda. Espa\~na 1680, Valpara\'iso, Chile.}
   \centerline{Laboratoire Paul Painlev\'e, Universit\'e de Lille 1. F-59655 Villeneuve d'Ascq, France.}
} 

\medskip

\centerline{\scshape Christian Olivera}
\medskip
{\footnotesize
 \centerline{Departamento de Matem\'atica, Universidade Estadual de Campinas.}
 \centerline{13.081-970-Campinas-SP-Brazil.}
}

\medskip

\centerline{\scshape Ciprian Tudor}
\medskip
{\footnotesize
 \centerline{Laboratoire Paul Painlev\'e, Universit\'e de Lille 1. F-59655 Villeneuve d'Ascq, France.}
   \centerline{Academy of Economical Studies, Bucharest, Romania.}
}

\bigskip


\begin{abstract}
We analyze the transport equation driven by a zero quadratic variation process. Using the stochastic calculus via regularization and the Malliavin calculus techniques, we prove the existence, uniqueness and absolute continuity of the law of the solution. As an example, we discuss the case when the noise is a Hermite process.
\end{abstract}


\section{Introduction}

Transport phenomena arise in many research fields; geosciences, physics, biology, even in social sciences, for naming just a few. The linear transport equation,
 
\begin{equation}\label{trasports}
    \partial_t u(t, x) +  b(t,x) \cdot  \nabla u(t,x)  = 0 \, ,
\end{equation}
emerges as a model for the concentration (density) of a pollutant in a flow, and may be considered as a particular case of the convection/advection equation when the flow under consideration is incompressible (i.e. has zero divergence). 

To point out some applications of this equation, we refer the reader to the works of Lions (\cite{lion1,lion2}) for a detailed exposition of its use in fluid dynamics, the work of Dafermos \cite{Dafermos} for its relation to conservation laws, and the work of Perthame \cite{Perhame} to understand the relevance of equations like (\ref{trasports}) in biology. Transport equations also appear in models for ocean salinity, see \cite{DGX}.

In this paper we analyze a stochastic transport equation. The following one-dimensional Cauchy problem is considered: given an initial-data $u_0$,
find $u(t,x;\omega) \in \mathbb{R}$, satisfying 
\begin{equation}\label{transport}
 \left \{
\begin{split}
    & \partial_t u(t, x;\omega) + \partial_x u(t, x;\omega)  \, \Big(\, b(t, x) + \frac{d Z_{t}}{dt}(\omega) \Big)= 0,
    \\ 
    & u(t_{0},x)= u_{0}(x),
\end{split}
\right .
\end{equation}
with $(t,x) \in U_T = [0,T] \times \mathbb{R}, \ \omega \in \Omega $, $b:[0,T] \times \mathbb{R}\to \mathbb{R}$ a given vector field, and the noise $(Z_{t})_{t\geq 0} $ is a stochastic process with zero quadratic variation. Problem (\ref{transport}) may be understood as a model for the concentration (density) of a pollutant in a flow where the velocity field has a random perturbation.

The stochastic transport equation driven by the standard Brownian motion was first addressed in Kunita's books (see \cite{Ku3}, \cite{Ku2}). More recently it has been studied by several authors; in \cite{CO} the linear additive case is considered, existence and uniqueness of weak $L^{p}$-solutions and a representation for the general solution were shown. The non-blow-up problem is addressed for the multiplicative case with Stratonovich form in \cite{Fre1}. In \cite{FGP2} the authors have shown that the introduction of a multiplicative noise in the PDE allows some improvements in the traditional hypothesis needed to prove that the problem is well-posed, this is extended later to a non-linear case in \cite{Oli}. A new uniqueness result is obtained in \cite{Maurelli} by means of Wiener-chaos decomposition, and working on the associated Kolmogorov equation.  
The extension of the model to the fractional Brownian noise has been done in \cite{OT}, where the existence of density of the solution and Gaussian estimates of the density were proven.

Our purpose is to solve the equation (\ref{transport}) and to analyze the properties of its solution in the case when the noise is a more general stochastic process, possibly non-Gaussian. We will focus on the situation when the noise $Z$ in (\ref{transport}) is a stochastic process with zero quadratic variation, this is well defined in the next section of the paper.

The reason why we chose such a noise is that the stochastic integration theory in the sense of Russo-Vallois (see \cite{RV}, \cite{RV1}) can be applied to it. In fact, the stochastic integral in (\ref{transport}) will be understood  as a symmetric integral in the Russo-Vallois sense with respect to the noise $Z$. Besides, in most of the papers cited in the previous paragraph, the It\^o-Wentzell formula plays a crucial role in the characterization of the solution, consider a zero quadratic variation process is as far as one can go in order to prove that characterization, avoiding the presence of second order terms (see Proposition 9 in \cite{FlandRusso}). Among the zero covariance processes lies the fractional Brownian motion (for $H \geq 1/2$), a self-similar process that find some of their applications in various kind of phenomena, going from hydrology and surface modelling to network traffic analysis and mathematical finance,
to name a few.

In this paper, first, the existence and uniqueness of the solution to (\ref{transport}) is proved, by using the so-called method of characteristics that comes from the works of Kunita (\cite{Ku}, \cite{Ku3}). Then it is show that the solution can be expressed as the initial data $u_{0}$ applied to the inverse flow associated to the transport equation. Later, using the techniques of Malliavin calculus on this representation, it is possible to prove the existence of the density of this solution. This will be done by showing that the Malliavin derivative of the solution is strictly positive. To this end, the explicit  expression of the Malliavin derivative must be calculated and controlled.

The outline of this paper is as follows: In Section 2 we present the basics definitions of the integration theory in the sense of Russo-Vallois, stochastic calculus via regularization, and the Malliavin techniques. The existence and uniqueness of weak solution to problem (\ref{transport}) is proved in Section 3. Section 4 provides the demonstration of the existence of the density of the solution. Finally in Section 5 we  illustrate the results considering a Hermite process as the driving noise.  

\section{Preliminaries}

In this section, we recall the notions from the stochastic calculus via regularization and from Malliavin calculus that we will use in what follows. More details can be found in \cite{RV1} or \cite{N}. 

\subsection{Stochastic calculus via regularization}

Throughout this paper $T$ will be a fixed positive real number. We recall the definition of the symmetric integral $d^{\circ} X$ that will appear in  (\ref{transport}) and in the definition of the solution in formula (\ref{DISTINTSTR}). 

Assume $(X_t)_{ t\geq 0} $ is  a continuous process and
$(Y_t)_{ t\geq 0}$ is  a process with paths in
$L_{loc}^{1}(\mathbb{R}^{+})$, i.e. for any $ b > 0$, $
\int_{0}^{b}|Y_t| dt <\infty$ a.s. The generalized stochastic integrals (forward, backward and symmetric)  are defined through a regularization procedure see \cite{RV}, \cite{RV1}. Here we recall  only the definition of the  symmetric integral (actually, since we are dealing with zero quadratic variation processes,  the three integrals will coincide) . Let $ I^{0}(\varepsilon, Y, dX)$  be the $\varepsilon-$symmetric integral

$$
I^{0}(\varepsilon, Y, dX)(t) = \int_{0}^{t} Y_{s}
\frac{(X_{s+\varepsilon}-X_{s-\varepsilon})}{2\varepsilon} ds, \ \ \  t \geq 0.
$$
The symmetric integral $\int_{0}^{t} Y _{s}d^{\circ} X_{s}$ is defined as 

\begin{equation}\label{simint}
   \int_{0}^{t} Y_{s} d^{\circ} X_{s}: =
\lim_{\varepsilon\rightarrow 0}I^{0}(\varepsilon, Y, dX)(t),
\end{equation}
for every $t\in [0,T]$, provided the limit exists in the {\it ucp} sense (uniformly on compacts in probability). 

In a similar way, the covariation or generalized bracket, $[X,Y]_{t}$ of two stochastic processes $X$ and $Y$ is defined as the limit {\it ucp} when $\varepsilon$ goes to zero of 

\begin{equation*}
[X, Y] _{\varepsilon, t}= \frac{1}{\varepsilon} \int_{0} ^{t} \left( X_{s+\varepsilon} -X_{s}\right)
\left( Y_{s+\varepsilon} -Y_{s}\right)ds, \ \ \ t\geq 0.
\end{equation*} 
Note that $[X,Y]$ coincide with the classical bracket when $X$ and $Y$ are semi-martingales.

 A process $X$, such that $[X,X]$ exists, is called finite quadratic variation processes. If $[X,X] \equiv 0$ we say that $X$ is a zero quadratic variation process.  Our integrand in (\ref{transport}) will be such a zero quadratic variation process.

\subsection{Malliavin derivative}

 We also present the elements from the Malliavin calculus that will be used in the paper.  We refer to \cite{N} for a more complete exposition. Consider ${\mathcal{H}}$ a real separable Hilbert space and $(B (\varphi), \varphi\in{\mathcal{H}})$ an isonormal Gaussian process \index{Gaussian process} on a probability space $(\Omega, {\mathcal{A}}, \mathbb{P})$, that is, a centered Gaussian family of random variables such that $\mathbb{E}\left( B(\varphi) B(\psi) \right)  = \langle\varphi, \psi\rangle_{{\mathcal{H}}}$.

We denote by $D$  the Malliavin  derivative operator that acts on smooth functions of the form $F=g(B(\varphi _{1}), \ldots , B(\varphi_{n}))$ ($g$ is a smooth function with compact support and $\varphi_{i} \in {{\mathcal{H}}}, i=1,...,n$)
\begin{equation*}
DF=\sum_{i=1}^{n}\frac{\partial g}{\partial x_{i}}(B(\varphi _{1}), \ldots , B(\varphi_{n}))\varphi_{i}.
\end{equation*}

It can be checked that the operator $D$ is closable from $\mathcal{S}$ (the space of smooth functionals as above) into $ L^{2}(\Omega; \mathcal{H})$ and it can be extended to the space $\mathbb{D} ^{1,p}$ which is the closure of $\mathcal{S}$ with respect to the norm
\begin{equation*}
\Vert F\Vert _{1,p} ^{p} = \mathbb{E} F ^{p} + \mathbb{E} \Vert DF\Vert _{\mathcal{H}} ^{p}. 
\end{equation*}

We denote by  $ \mathbb{D} ^{k, \infty}:= \cap _{p\geq } \mathbb{D} ^{k,p}$ for every $k\geq 1$. In this paper, $\mathcal{H}$ will be the standard Hilbert space $L^ {2}([0,T])$.

We will make use of the chain rule for the Malliavin derivative (see Proposition 1.2.4 in \cite{N}). That is, if $\varphi: \mathbb{R}\to \mathbb{R}$ is a differentiable with bounded derivative and $F\in \mathbb{D} ^ {1,2}$, then  $\varphi (F) \in \mathbb{D} ^ {1,2}$ and
\begin{equation}
\label{chain}
D\varphi(F)= \varphi ' (F) DF.
\end{equation}

An important role of the Malliavin calculus  is that it provides criteria for the existence of the density of a random variable. Here we will use the following result: if $F$ is a random variable in $\mathbb{D} ^ {1,2}$ such that $\Vert DF \Vert _{\mathcal{H}} >0$ almost surely, then $F$ admits a density with respect to the Lebesgue measure (see e.g. Theorem 2.1.3 in \cite{N}).

\section{Existence and uniqueness}

 This section presents the results concerning the existence of the weak solution for the stochastic transport equation (\ref{transport}) driven by a zero quadratic variation process.

Let $(\Omega, \mathcal{F}, P)$ be a fixed probability space and $(W_{t})_{t\in [0,T]} $ a standard Wiener process on it. Consider a continuous process $(Z_{t})_{t\geq 0}$, adapted to the filtration generated by $W$, and such that 
$$[Z, Z]_{t}=0, \hskip0.5cm \forall \  t\geq 0. $$ 
The quadratic variation $[Z,Z]$ is understood in the sense of stochastic calculus via regularization, as introduced in Section 2.1.

We consider the one-dimensional Cauchy problem (\ref{transport}) driven by the process $Z$, with a given initial-data $u_0$. 

We recall the notion of weak solution to (\ref{transport}) (see \cite{FGP2}).

\begin{definition}\label{defisolu}  A stochastic process
$u\in  L^{\infty}(\Omega\times[0, T]\times \mathbb{R})$ is called 
a weak $L^{p}-$solution of the Cauchy problem (\ref{transport}), if for any $\varphi \in C_c^{\infty}(\mathbb{R})$,  $\int _{\mathbb{R}} u(t,x)\varphi(x)dx$ is an    adapted real valued process which has a continuous modification, finite covariation, and for all $t \in [0,T]$, $\mathbb{P}$-almost surely

\begin{equation}\label{DISTINTSTR}
\begin{split}
	\int_{\mathbb{R}} u(t,x) \varphi(x) dx =& \int_{\mathbb{R}}  u_{0}(x) \varphi(x) \ dx
   +\int_{0}^{t} \int_{\mathbb{R}}  u(s,x) \ b(s,x) \partial_{x} \varphi(x) \ dx ds\nonumber  
\\
+ &  \int_{0}^{t} \int_{\mathbb{R}} u(s,x)  b'(s,x)  \varphi(x) \ dx ds  + \int_{0}^{t} \int_{\mathbb{R}} u(s,x) \partial_{x} \varphi(x) \ dx  d^{\circ}Z _s,  
\end{split}
\end{equation}
where $b'(s,x)$ denotes the derivative of $b(s,x)$ with respect to the spatial variable $x$, and the integral $d^{\circ}Z$ is a symmetric integral defined via regularization (see (\ref{simint})). 
\end{definition}

Following the arguments presented in \cite{OT}, the existence and uniqueness of the weak solution to (\ref{transport}) follows immediately. 

\vskip0.2cm

\begin{proposition}
Assume that  $b \in L^{\infty}((0,T); C_b^{1}(\mathbb{R^{d}}))$. Then there exists a $C^{1} (\mathbb{R^{d}}) $ stochastic flow of diffeomorhism $\left(X_{s,t},\  0\leq s\leq t\leq T\right)$, that satisfies 
\begin{equation}
\label{1}
X_{s,t} (x)= x+ \int_{s}^{t} b(u, X_{s,u} (x)) du + Z_{t} - Z_{s}
\end{equation}
for every $x\in \mathbb{R}^{d}$. 
Moreover, given $u_{0}\in L^{\infty}(\mathbb{R})$, the stochastic process 
\begin{equation}
\label{19a-1}u(t, x):= u_0(X_{t}^{-1}(x)), \hskip0.5cm t\in [0,T], x\in \mathbb{R}
\end{equation} is the 
unique weak $L^{\infty}-$solution of the Cauchy problem \ref{transport}, where
$X_{t}:= X_{0,t}$ for every $t\in [0,T]$.
\end{proposition}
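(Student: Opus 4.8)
The plan is to establish the proposition in two stages: first construct the stochastic flow of diffeomorphisms solving the characteristic equation \eqref{1}, and then verify that pushing the initial datum through the inverse flow yields the unique weak solution in the sense of Definition \ref{defisolu}.

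\textbf{Step 1: The characteristic flow.} I would begin by analyzing the characteristic ordinary differential equation \eqref{1}. Writing $Y_{s,t}(x) := X_{s,t}(x) - (Z_t - Z_s)$, the additive structure of the noise lets me recast \eqref{1} as a \emph{pathwise} ordinary differential equation
\begin{equation*}
\frac{d}{dt} Y_{s,t}(x) = b\bigl(t, Y_{s,t}(x) + Z_t - Z_s\bigr), \qquad Y_{s,s}(x) = x,
\end{equation*}
which for each fixed $\omega$ is a genuine ODE with right-hand side that is continuous in $t$ and Lipschitz in the spatial variable, since $b \in L^\infty((0,T); C_b^1(\mathbb{R}^d))$ guarantees $b$ and $b'$ are uniformly bounded. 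The continuity of the paths of $Z$ is what makes the driving term $t \mapsto Z_t - Z_s$ continuous, so the classical Carathéodory/Picard--Lindelöf theory applies pathwise. This gives existence and uniqueness of $X_{s,t}(x)$ for each $\omega$, and the uniform Lipschitz bound on $b$ yields the flow property $X_{s,t} \circ X_{r,s} = X_{r,t}$ together with $C^1$-regularity and invertibility in $x$ (differentiating the equation in $x$ and using Gronwall to control the Jacobian away from zero), so $X_{s,t}$ is a diffeomorphism. Measurability/adaptedness of $(s,t,x,\omega) \mapsto X_{s,t}(x)$ follows because $Z$ is adapted and the Picard iterates preserve adaptedness.

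\textbf{Step 2: The solution formula and the It\^o--Wentzell verification.} With the flow in hand I would define $u(t,x) := u_0(X_t^{-1}(x))$ and verify it satisfies \eqref{DISTINTSTR}. The natural route, following \cite{OT}, is to apply the It\^o--Wentzell change-of-variables formula along the characteristics: for a test function $\varphi \in C_c^\infty(\mathbb{R})$, I would compute the covariation/symmetric differential of $t \mapsto \int_{\mathbb{R}} u(t,x)\varphi(x)\,dx$. The crucial simplification, and the reason the hypothesis $[Z,Z] \equiv 0$ is imposed, is that the zero quadratic variation of $Z$ kills the second-order correction terms that would otherwise appear in the It\^o--Wentzell expansion (this is exactly the point invoked via Proposition~9 in \cite{FlandRusso}); the symmetric Russo--Vallois integral $d^\circ Z$ then reproduces precisely the last term in \eqref{DISTINTSTR}, while the drift $b$ contributes the two terms involving $b\,\partial_x\varphi$ and $b'\varphi$ after an integration by parts in $x$. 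I would check that the required regularity holds, namely that $\int_{\mathbb{R}} u(t,x)\varphi(x)\,dx$ is adapted with a continuous modification and finite covariation, which follows from the regularity of the flow and the boundedness $u_0 \in L^\infty$.

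\textbf{Step 3: Uniqueness.} For uniqueness I would argue that any weak solution is transported along the characteristics. One standard way is to fix an arbitrary weak solution $v$, consider the composition $v(t, X_t(x))$, and show via the weak formulation \eqref{DISTINTSTR} and a regularization/commutator argument (in the spirit of DiPerna--Lions, as used in \cite{FGP2}) that $t \mapsto v(t, X_t(x))$ is constant in $t$, forcing $v(t,x) = u_0(X_t^{-1}(x))$; here again the vanishing quadratic variation of $Z$ ensures no extra drift is generated in the composition. The main obstacle I anticipate is the rigorous justification of the It\^o--Wentzell computation in Step~2 within the Russo--Vallois regularization framework rather than the semimartingale setting: one must carefully pass to the limit $\varepsilon \to 0$ in the regularized symmetric integrals and confirm that all the bracket terms involving $Z$ vanish, which is precisely where the zero-quadratic-variation hypothesis does the essential work. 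Since the proposition is asserted to follow from \cite{OT}, I would lean on those estimates to control the limiting procedure and suppress the routine but delicate convergence details.
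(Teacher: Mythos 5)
Your proposal is correct and follows essentially the same route as the paper, which simply observes that the proof of Theorem 1 in \cite{OT} (method of characteristics, It\^o--Wentzell formula in the Russo--Vallois sense, with the zero-quadratic-variation hypothesis eliminating the second-order bracket terms) carries over verbatim since that is the only property of the fractional Brownian noise used there. Your sketch in fact supplies more of the underlying structure (the pathwise reduction of the characteristic equation, the verification of \eqref{DISTINTSTR}, and the uniqueness-via-composition argument) than the paper's one-paragraph citation does, and it correctly identifies the role of $[Z,Z]\equiv 0$ as the key point.
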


\begin{proof}
 The demonstration follows closely the lines of the proof of Theorem 1 in \cite{OT}, which is based on the It\^o formula in the Russo-Vallois sense  for functions depending on $\omega$. Although in \cite{OT} the noise is a fractional Brownian motion with Hurst parameter $H>\frac{1}{2}$, the only property of the fBm needed in the demonstration is the fact that, for $H>\frac{1}{2}$, is a zero quadratic variation process. Therefore, all the steps in the proof of Theorem 1 in \cite{OT} remain valid when the noise is a general zero quadratic variation process.
\end{proof}

\vskip0.3cm

From (\ref{19a-1}), it is clear that the properties of the solution to the transport equation (\ref{transport}), in particular the existence of its density, will depend on the initial condition $u_{0}$ and on the properties of the inverse flow $X_{t}^{-1}$. Since later in the paper we will assume on $u_{0}$ as much regularity as needed, we focus on the analysis of the inverse flow. Let's start by describing its dynamic.

\begin{lemma}
Assume $  b\in L ^{\infty} \left( (0,T); C_{b} ^{1} (\mathbb{R} )\right) \cap C \left( (0,T) \times \mathbb{R}\right).$ Then the inverse flow satisfies the backward stochastic equation

\begin{equation}\label{back}
Y_{s,t}(x)= x- \int_{s} ^{t} b(r,  Y_{r,t}(x))dr - (Z_{t}- Z_{s})
\end{equation}
for every $0\leq s\leq t\leq T$ and for every $x\in \mathbb{R}$. 

Moreover, $Y$ is the unique process that satisfies (\ref{back}) with $Y_{s,s}(x)=x$. 
\end{lemma}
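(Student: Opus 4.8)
The plan is to exploit the fact that, since the noise $Z$ enters the forward equation (\ref{1}) only through its increments $Z_t - Z_s$ and not through a genuine stochastic integral, for each fixed $\omega$ equation (\ref{1}) is an ordinary (random) differential equation driven by the continuous path $Z_\cdot(\omega)$. The regularity $b \in L^\infty((0,T); C_b^1(\mathbb{R}))$ guarantees that $b$ is globally Lipschitz in the space variable, so the whole argument can be carried out pathwise using classical flow theory, with the diffeomorphism property already supplied by the preceding Proposition. I therefore define the inverse flow by $Y_{s,t} := X_{s,t}^{-1}$, which is well defined because $X_{s,t}$ is a $C^1$ diffeomorphism.

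First I would establish the cocycle (flow) property $X_{s,t} = X_{r,t} \circ X_{s,r}$ for $0 \le s \le r \le t \le T$. For fixed $s$ and $x$, both $t \mapsto X_{s,t}(x)$ and $t \mapsto X_{r,t}(X_{s,r}(x))$ solve the integral equation (\ref{1}) with the same value $X_{s,r}(x)$ at time $r$; by uniqueness of solutions of (\ref{1}) (Lipschitz $b$) they coincide for all $t \ge r$. Applying this identity to the point $y = Y_{s,t}(x)$, for which $X_{s,t}(y) = x$, and using injectivity of $X_{u,t}$, I obtain the key relation
\begin{equation*}
X_{s,u}\big(Y_{s,t}(x)\big) = Y_{u,t}(x), \qquad s \le u \le t,
\end{equation*}
which says precisely that the curve $u \mapsto Y_{u,t}(x)$ is the forward characteristic passing through $x$ at time $t$.

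The backward equation then follows by a direct computation: writing (\ref{1}) for the point $Y_{s,t}(x)$ between times $s$ and $u$ and substituting the relation above gives
\begin{equation*}
Y_{u,t}(x) = Y_{s,t}(x) + \int_s^u b\big(r, Y_{r,t}(x)\big)\, dr + Z_u - Z_s .
\end{equation*}
Setting $u = t$ and using $Y_{t,t}(x) = x$, a rearrangement yields (\ref{back}); the terminal condition $Y_{s,s}(x) = x$ is then automatically consistent with (\ref{back}).

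Finally, for uniqueness I would fix $t$ and $x$ and let $g^{1}, g^{2}$ be two solutions $s \mapsto Y_{s,t}(x)$ of (\ref{back}). Subtracting the two equations and using the uniform Lipschitz bound $|b(r,a) - b(r,a')| \le L\,|a - a'|$, I get $|g^{1}(s) - g^{2}(s)| \le L \int_s^t |g^{1}(r) - g^{2}(r)|\, dr$; a backward Gronwall argument (the integral runs from $s$ to $t$) then forces $g^{1} \equiv g^{2}$. The only genuinely conceptual step is the identity in the second paragraph --- recognizing that the inverse flow traces the forward characteristics --- after which (\ref{back}) drops out; the pathwise reading of the noise and the cocycle property are routine consequences of the Lipschitz ODE theory already encapsulated in the Proposition.
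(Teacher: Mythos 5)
Your argument is correct. The paper itself gives no proof of this lemma --- it simply defers to Lemma 2 of the cited reference \cite{OT} --- and what you have written is precisely the standard pathwise argument that such a reference encapsulates: since the noise enters only additively through its increments, everything reduces to Lipschitz ODE theory for each fixed $\omega$, the cocycle identity $X_{s,t}=X_{u,t}\circ X_{s,u}$ yields $X_{s,u}(Y_{s,t}(x))=Y_{u,t}(x)$, evaluating the forward equation along this curve and setting $u=t$ gives (\ref{back}), and backward Gronwall gives uniqueness. Your write-up is a complete, self-contained version of the proof the paper outsources, with no gaps.
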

\begin{proof} Analogously to the proof of Lemma 2 in \cite{OT}. 
\end{proof}

\begin{remark}
If we set
$R_{t,x}(u) = Y_{t-u, t}(x) $ for $u\in [0,t]$, and $x\in \mathbb{R}$, then we have
\begin{equation}\label{5m-4}
R_{t,x}(u)= x- \int_{0} ^{u} b(t-a, R_{t,x}(a) ) da - (Z_{t} - Z_{t-u}).
\end{equation} 
This can be easily seen from (\ref{back}) by making the change of variable $a=t-r.$ If we set
$$B_{t}(a,x)=- b(t-a, x) $$
for $a\leq t$, $ x\in \mathbb{R}$, and 
\begin{equation*}
Z_{u,t} = - (Z_{t}- Z_{t-u})
\end{equation*}
for $t\in [0,T]$ and $u\leq t$, then (\ref{5m-4}) becomes
\begin{equation}\label{5m-5}
R_{t,x}(u)=x + \int_{0} ^{u} B_{t} (a, R_{t,x}(a))da + Z_{u,t}.
\end{equation}
\end{remark}

We will actually use the above equation in order to obtain the properties of the inverse flow.

\section{The Malliavin derivative and the density of the solution}

 We will show that the solution to (\ref{transport}) is Malliavin differentiable and, using the techniques of the Malliavin calculus, that it admits a density with respect to the Lebesgue measure. From the representation (\ref{19a-1}), it is enough to focus on the Malliavin derivative of the inverse flow $X_{t} ^ {-1}$ whose dynamic is governed by (\ref{back}) or (\ref{5m-5}).

\subsection{Malliavin differentiability of the inverse flow}

 Throughout this section we assume that  

\begin{equation}\label{6m-1}
b\in  L ^{\infty} \left( (0,T); C_{b} ^{1} (\mathbb{R} )\right) \cap C \left( (0,T) \times \mathbb{R}\right).
\end{equation}
The noise $Z$  is a zero quadratic variation process, adapted to the filtration $(\mathcal{F}_{t})_{t\in [0,T]}$ such that $Z_{t}\in \mathbb{D} ^ {1,2}$ for every $t\in [0,T]$. We suppose also that

\begin{equation}
\label{6m-2}
\sup_{t\in [0,T]}\mathbb{E} \left| Z_{t}\right| ^{2} < \infty  \mbox{ and } \sup_{t\in [0,T]}\mathbb{E} \Vert DZ_{t} \Vert ^{2} _{L^{2}([0,T ])}<\infty.
\end{equation}

\begin{lemma}

Under hypothesis (\ref{6m-1}) and (\ref{6m-2}),  the equation (\ref{5m-5}) has an unique solution.

\end{lemma}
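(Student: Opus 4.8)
The plan is to solve (\ref{5m-5}) pathwise, treating it as a random ordinary integral equation with \emph{additive} noise, so that no stochastic integration is involved and a Banach fixed point argument applies directly. First I would fix $t\in[0,T]$, $x\in\mathbb{R}$, and $\omega\in\Omega$, and record two consequences of the hypotheses: by (\ref{6m-1}) the field $B_t(a,y)=-b(t-a,y)$ is bounded, $|B_t(a,y)|\le\|b\|_{\infty}$, and globally Lipschitz in the spatial variable, $|B_t(a,y_1)-B_t(a,y_2)|\le L\,|y_1-y_2|$ with $L=\|b'\|_{\infty}$, both uniformly in $a\in[0,t]$; and since $Z$ has continuous paths, $u\mapsto Z_{u,t}(\omega)=-(Z_t-Z_{t-u})(\omega)$ is continuous, hence bounded, on $[0,t]$.

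Next I would introduce, for fixed $\omega$, the Picard operator $\Phi$ on the Banach space $C([0,t])$ equipped with the supremum norm $\|\cdot\|_\infty$, defined by
\begin{equation*}
\Phi(R)(u)=x+\int_0^u B_t(a,R(a))\,da+Z_{u,t}.
\end{equation*}
The image is continuous in $u$ (the integral term is continuous and $Z_{\cdot,t}$ is continuous), so $\Phi$ maps $C([0,t])$ into itself. Using the Lipschitz bound and iterating gives the classical estimate
\begin{equation*}
\big|\Phi^n(R^1)(u)-\Phi^n(R^2)(u)\big|\le\frac{(Lu)^n}{n!}\,\|R^1-R^2\|_\infty,
\end{equation*}
so for $n$ large enough $\Phi^n$ is a strict contraction; by the Banach fixed point theorem $\Phi$ admits a unique fixed point, which is the unique continuous solution of (\ref{5m-5}) for that $\omega$. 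Uniqueness can equally be obtained directly by applying Gronwall's lemma to the difference of two hypothetical solutions.

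Finally, to turn this $\omega$-by-$\omega$ construction into a bona fide stochastic process I would argue that the Picard iterates $R^{(n)}:=\Phi^n(R^{(0)})$, started from $R^{(0)}\equiv x$, are jointly measurable in $(\omega,u)$ and are functionals of $(Z_s)_{s\le t}$, hence adapted; since the convergence is uniform in $u$, the limit $R_{t,x}$ inherits joint measurability and adaptedness. The genuinely delicate point is not the fixed point argument, which is routine, but this bookkeeping that promotes the pathwise solution to a well-defined, adapted, square-integrable process: here the integrability assumption (\ref{6m-2}) on $Z_t$ does the work, ensuring $\sup_{u\in[0,t]}\mathbb{E}|R_{t,x}(u)|^2<\infty$ and thereby preparing the ground for the Malliavin differentiability analysis carried out in the next step.
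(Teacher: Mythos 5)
Your proof is correct and follows essentially the same route as the paper, which simply invokes classical Picard iteration (citing Lemma 5 of \cite{FeRo} and Lemma 2.2.1 of \cite{N}) without writing out the details. You supply exactly those details --- the pathwise contraction argument made possible by the additive noise and the Lipschitz bound $\|b'\|_\infty$ from (\ref{6m-1}), plus the measurability and adaptedness bookkeeping --- so nothing further is needed.
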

\begin{proof}
	Using classic Picard iterations is clear that there exists an unique solution to equation (\ref{5m-5}). We refer the reader to the proofs of Lemma 5 in \cite{FeRo} or Lemma 2.2.1 in \cite{N} for similar results.
\end{proof}

Now, the goal is  to show the Malliavin differentiability of the solution to (\ref{5m-5}).

\begin{proposition}\label{p2} Assume (\ref{6m-1}) and (\ref{6m-2}). Then, for $x\in \mathbb{R}, t\in [0,T], u\leq t$, the random variable $R_{t,x} (u)$ given by (\ref{5m-5}) belongs to $ \mathbb{D} ^{1,2}$. Moreover, the Malliavin derivative of $R_{t,x}(u)$ satisfies
\begin{equation}
\label{eqD}
D_{\alpha}R_{t,x}(u)= \int_{0}^{u} B'_{t}(s, R_{t,x}(s)) D_{\alpha} R_{t,x} (s)ds + D_{\alpha}Z_{u,t}
\end{equation}
for every $\alpha <t$. 
\end{proposition}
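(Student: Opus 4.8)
The plan is to prove Malliavin differentiability by a Picard approximation argument, showing that the iterates are differentiable and that both they and their derivatives converge in the appropriate spaces, so that the closedness of the operator $D$ transfers differentiability to the limit. This is the standard strategy (the reference to Lemma~2.2.1 in \cite{N} in the preceding proof signals exactly this route), and I would carry it out as follows.

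First I would set up the Picard scheme attached to \eqref{5m-5}: define $R^{0}_{t,x}(u)=x$ and
\begin{equation*}
R^{n+1}_{t,x}(u)=x+\int_{0}^{u}B_{t}(a,R^{n}_{t,x}(a))\,da+Z_{u,t}.
\end{equation*}
Since $b\in L^{\infty}((0,T);C_{b}^{1}(\mathbb{R}))$, the field $B_{t}$ has a bounded, continuous spatial derivative $B'_{t}$, and $Z_{u,t}=-(Z_{t}-Z_{t-u})\in\mathbb{D}^{1,2}$ by hypothesis \eqref{6m-2}. I would prove by induction that each $R^{n}_{t,x}(u)\in\mathbb{D}^{1,2}$ and that its derivative satisfies
\begin{equation*}
D_{\alpha}R^{n+1}_{t,x}(u)=\int_{0}^{u}B'_{t}(a,R^{n}_{t,x}(a))\,D_{\alpha}R^{n}_{t,x}(a)\,da+D_{\alpha}Z_{u,t},
\end{equation*}
using the chain rule \eqref{chain} (valid since $B'_{t}$ is bounded) together with the fact that $D$ commutes with the Lebesgue integral in $a$.

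Next I would establish the two convergences needed. The convergence $R^{n}_{t,x}(u)\to R_{t,x}(u)$ in $L^{2}(\Omega)$ follows from the uniqueness lemma and a standard Gronwall estimate exploiting the Lipschitz bound $\sup_{a,x}|B'_{t}(a,x)|\le\|b'\|_{\infty}<\infty$. The more delicate part is to show that $\sup_{n}\mathbb{E}\|DR^{n}_{t,x}(u)\|_{L^{2}([0,T])}^{2}<\infty$ and that $DR^{n}_{t,x}(u)$ is Cauchy in $L^{2}(\Omega;L^{2}([0,T]))$. Taking $L^{2}([0,T])$-norms in $\alpha$ and then expectations in the recursion for $DR^{n+1}$, applying Jensen and the boundedness of $B'_{t}$, one obtains an integral inequality of the form
\begin{equation*}
\mathbb{E}\|D R^{n+1}_{t,x}(u)\|^{2}\le C\int_{0}^{u}\mathbb{E}\|D R^{n}_{t,x}(a)\|^{2}\,da+C\,\mathbb{E}\|DZ_{u,t}\|^{2},
\end{equation*}
and the uniform bound on $\mathbb{E}\|DZ_{t}\|^{2}$ from \eqref{6m-2} feeds a Gronwall/iteration argument giving the uniform bound. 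The Cauchy property then follows by subtracting consecutive recursions and iterating the same Gronwall estimate.

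The main obstacle, and the step I would treat most carefully, is passing the derivative recursion to the limit to obtain \eqref{eqD}: one must justify that $B'_{t}(a,R^{n}_{t,x}(a))\,D_{\alpha}R^{n}_{t,x}(a)\to B'_{t}(a,R_{t,x}(a))\,D_{\alpha}R_{t,x}(a)$ in $L^{2}(\Omega;L^{2})$. This is a product of two factors each converging only in $L^{2}$, so I would control it by splitting into two terms, using the uniform $L^{2}$-bound on $DR^{n}$ established above together with the (sub)convergence and continuity of $B'_{t}$ to handle the factor $B'_{t}(a,R^{n})-B'_{t}(a,R)$, and the $L^{2}$-convergence of $DR^{n}$ for the remaining term. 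Once both convergences are secured, closedness of $D$ yields $R_{t,x}(u)\in\mathbb{D}^{1,2}$, and the limit of the recursion is exactly the linear equation \eqref{eqD} for $D_{\alpha}R_{t,x}(u)$, completing the proof.
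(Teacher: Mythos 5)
Your proposal follows essentially the same route as the paper: the Picard scheme for \eqref{5m-5}, induction to get $R^{(n)}_{t,x}(u)\in\mathbb{D}^{1,2}$ with the derivative recursion, the $L^{2}$ convergence of the iterates, a Gronwall estimate giving the uniform bound $\sup_{n}\mathbb{E}\Vert DR^{(n)}_{t,x}(u)\Vert^{2}_{L^{2}([0,T])}<\infty$, and the closability criterion (Lemma 1.2.3 in \cite{N}) to conclude. If anything, you are more careful than the paper at the final step --- the paper invokes only weak convergence of the derivatives and immediately asserts \eqref{eqD}, whereas you explicitly address passing to the limit in the product $B'_{t}(a,R^{(n)})D_{\alpha}R^{(n)}$ --- so the proposal is correct and matches the paper's argument.
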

\begin{proof}
	Fix $t\in [0,T], x\in \mathbb{R}$. Define the usual iterations 
$$R ^{(0)} _{t,x}(u)=x \mbox{ for every  } t\in [0,T]$$
and for $n\geq 0$,
$$ R^{(n+1)} _{t,x}(u)= x + \int_{0}^{u} B_{t}(s, R ^{(n)} _{t,x})(s) ds + Z _{u,t} $$
with $0\leq u\leq t.$

It is clear that $R ^{(0)}_{t,x}(u)$ is $\mathcal{F}_{t}$-measurable and belongs to $\mathbb{D} ^{1,2}$. By a trivial induction argument and using Proposition 1.2.4 in \cite{N}, we have that $R^{(n)}_{t,x} (u)$ is $ \mathcal{F}_{t}$-measurable and $ R^{(n)}_{t} $  belongs to $\mathbb{D} ^{1,2}$ for every $n\geq 0$. This implies that
$$D_{\alpha} R^ {(n)} _{t,x}(u)=0 \mbox{ if } \alpha >t.$$

Our proof follows the following standard arguments: first, we notice the $L^ {2}$ convergence of $R ^ {(n)}_{t,x}(u)$ to $ R_{t,x}(u)$. Secondly, we prove an uniform bound on the sequence of  Malliavin derivatives of $R ^ {(n)}_{t,x}(u)$ and then we conclude the Malliavin differentiability of $R_{t,x}(u)$ along with the expression of its Malliavin derivative. 

Concerning the first step, we only  remind that (see e.g. the proof of Lemma 5 in \cite{FeRo}), for every $u\in [0,T]$
\begin{equation}
\label{6m-3}
\sup_{t\geq u } \mathbb{E} \left| R ^{(n)}_{t,x}(u) - R_{t,x}(u) \right| ^{2} \to _{n\to \infty} 0.
\end{equation}

Let us use the notation
\begin{equation}
\label{cond-noise}
m_{T}:=\sup_{t\in [0,T] } \sup_{u\in [0,t]}\mathbb{E} \Vert D Z_{u, t} \Vert _{ L^{2} ([0,T])}^{2} 
\end{equation}
  which is a finite positive constant due to (\ref{6m-2}).
We now show that
\begin{equation}\label{7m-1}
\sup_{n\geq 1}\sup_{t\geq u} \mathbb{E} \Vert D R^{(n)}_{t,x}(u) \Vert _{ L^{2} ([0,T])}^{2} < \infty.
\end{equation}
For $u\in [0,T]$ fixed, denote by
$$g^{n}_{u}= \sup_{t\geq u} \mathbb{E} \Vert DR^{(n)} _{t,x}(u) \Vert ^{2} _{ L^{2}([0,T])} \ = \sup_{t\geq u} \mathbb{E} \int_{0} ^{T} (D_{\alpha} R^{(n)}_{t,x}(u)) ^{2} d\alpha.$$
Then, using the fact that $b'$ is bounded
\begin{equation*}
\begin{split}
g^{(n+1)} _{u} = & \sup_{t\geq u} \mathbb{E} \left( \int_{0}^{T} \int_{0}^{u} B_{t}'(a, R ^{(n)}_{t,x}(a) ) D_{\alpha } R^{(n)}_{t,x} (a) da + D_{\alpha } Z_{u,t} \right) ^{2} d\alpha \\
 \leq  & C(T) \sup_{t\geq u} \left( \mathbb{E} \int_{0} ^{T} d\alpha \int_{0} ^{u} da( D_{\alpha } R^{(n)}_{t,x} (a)) ^{2} + m_{T}\right) \\
 \leq  & C(T) \sup_{t\geq u} \left( \int_{0}^{u} da \sup_{t\geq a} \mathbb{E} \int_{0} ^{T} d\alpha ( D_{\alpha } R^{(n)}_{t,x} (a)) ^{2} + m_{T}\right) \\
 = & C(T) ( \int_{0} ^{u} g^{n}_{a} da + m_{T} ),
\end{split}
\end{equation*}
because $a\leq u \leq t$. By taking the supremum over $n\geq 1$, and then applying Gronwall lemma, we get 
$$\sup_{n\geq 1} g ^{n} _{u} \leq C_{1}(T) e ^{C_{2}(T)} <\infty.$$ 
So, (\ref{7m-1}) holds. This, together with (\ref{6m-3}) and Lemma 1.2.3 in \cite{N} implies that 
$$R_{t,x}(u) \in \mathbb{D} ^{1,2} \mbox{ for every } t\in [0,T],$$
and the sequence of derivatives $(DR_{t,x} ^{(n)}(u))_{n\geq 0}$ is convergent in $ L^{2} (\Omega \times [0,T])$ to $DR_{t,x}(u)$, hence (\ref{eqD}) holds true.
\end{proof}

\vskip0.5cm

\subsection{The density of the solution}

 Let $Y_{s,t}(x)$ be the inverse flow given by (\ref{back}), with $0\leq s\leq t\leq T$ and $x\in \mathbb{R}$. Recall the notation $R_{t,x}(u) = Y_{t-u, t}(x)$ if $u< t$, and the hypothesis on the noise $Z$.


From (\ref{5m-5}) and Proposition \ref{p2}, it follows that $Y_{s,t}(x) $ is Malliavin differentiable. Our next step is to find the expression of its Malliavin derivative.

\begin{proposition}
Assume that $b$ satisfies (\ref{6m-1}), $Z$ satisfies (\ref{6m-2}) and let $Y$ be given by (\ref{back}). Then for every $s\leq t, \alpha \leq t $ and $x\in \mathbb{R}$, we have  

\begin{equation}\label{dy}
\begin{split}
D_{\alpha} Y _{s,t} (x) =& 1_{(0,T)}(\alpha) e ^{- \int_{s} ^{t} b'(u, Y_{u,t}(x) ) du } \int_{s} ^{t}  b'(u, Y_{u,t}(x)) D_{\alpha} (Z_{u,t}) e ^{\int_{u}^{t} b'(r, Y_{r,t}(x)) dr } \ du \nonumber \\
&  + \  D_{\alpha} Z_{s,t}. 
\end{split}
\end{equation}
%
\end{proposition}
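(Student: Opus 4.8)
The plan is to derive the explicit formula (\ref{dy}) by solving the linear integral equation satisfied by the Malliavin derivative. From Proposition \ref{p2}, the derivative $D_\alpha R_{t,x}(u)$ satisfies the linear equation (\ref{eqD}), which I will translate back to the variable $Y$ via the substitution $u \mapsto t-u$ used in the Remark. Writing $Y_{s,t}(x) = R_{t,x}(t-s)$ and recalling $B_t(a,x) = -b(t-a,x)$ so that $B_t'(a,x) = -b'(t-a,x)$, equation (\ref{eqD}) becomes a first-order linear equation in the parameter $s$ (equivalently in $u = t-s$) for the unknown function $s \mapsto D_\alpha Y_{s,t}(x)$, with an inhomogeneous forcing term coming from $D_\alpha Z_{s,t}$.

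First I would fix $\alpha$, $t$, $x$ and set $\xi(s) := D_\alpha Y_{s,t}(x)$. Differentiating the integral equation (\ref{back}) in the sense of the Malliavin derivative, or equivalently rewriting (\ref{eqD}) in the $Y$-variables, I expect to obtain a relation of the form
\begin{equation*}
\xi(s) = -\int_s^t b'(u, Y_{u,t}(x))\, \xi(u)\, du + D_\alpha Z_{s,t},
\end{equation*}
where the coefficient $b'(u, Y_{u,t}(x))$ is a known, bounded, continuous (in $u$) process once the flow $Y$ is fixed. This is a linear Volterra-type equation with a backward orientation in $s$. The homogeneous part is solved by the exponential integrating factor $\exp\!\big(-\int_s^t b'(u,Y_{u,t}(x))\,du\big)$, precisely the prefactor appearing in (\ref{dy}), and the variation-of-constants method then produces the claimed convolution against $D_\alpha Z_{u,t}$. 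The indicator $1_{(0,T)}(\alpha)$ simply records that the Malliavin derivative vanishes outside the time interval, consistent with the fact from Proposition \ref{p2} that $D_\alpha R = 0$ when $\alpha > t$ and with the $L^2([0,T])$ setting for $\mathcal H$.

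The cleanest route is to verify the formula rather than derive it from scratch: I would \emph{define} the right-hand side of (\ref{dy}) as a candidate process $\eta(s)$ and check that it solves the linear equation above, then invoke the uniqueness of solutions to that linear equation. Concretely, substituting $\eta(s)$ into $-\int_s^t b'(u,Y_{u,t}(x))\eta(u)\,du + D_\alpha Z_{s,t}$ and differentiating in $s$ should reproduce $\eta'(s)$; the boundary condition $\eta(t) = D_\alpha Z_{t,t}$ (which vanishes since $Z_{0,t} = -(Z_t - Z_t) = 0$) pins down the constant. This reduces the proposition to a routine, if slightly delicate, computation with the integrating factor, exchanging the order of the $du$ and $dr$ integrations via Fubini.

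The main obstacle I anticipate is justifying the manipulations rigorously rather than formally. The coefficient $b'(u, Y_{u,t}(x))$ is random and only bounded, and $s \mapsto D_\alpha Z_{s,t}$ need not be differentiable, so the ``differentiate in $s$'' step must be interpreted carefully — ideally by working directly with the integral form and Fubini's theorem in $L^2(\Omega)$, using the integrability bounds (\ref{6m-2}) and (\ref{7m-1}) to license the interchange of the Malliavin derivative with the time integrals. The continuity hypothesis in (\ref{6m-1}) guarantees that $u \mapsto b'(u,Y_{u,t}(x))$ is continuous along the (continuous) flow, which is what makes the integrating factor well defined pathwise. Once these integrability and continuity points are secured, the uniqueness of the linear equation — already essentially available from the Picard/Gronwall argument in Proposition \ref{p2} — closes the proof.
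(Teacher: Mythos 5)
Your proposal is correct and follows essentially the same route as the paper: both start from the linear Volterra equation (\ref{eqD}) for the Malliavin derivative, solve it by variation of constants with the integrating factor $e^{\pm\int b'(u,Y_{u,t}(x))\,du}$, and handle the non-differentiability of $s\mapsto D_\alpha Z_{s,t}$ by regularization (the paper mollifies $Z$ and integrates by parts, which is the same device you flag). Your ``verify the candidate and invoke uniqueness'' variant is just the standard equivalent formulation of that computation, so there is no substantive difference.
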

\begin{proof}
	First we notice that, if $\alpha >t$, $D_{\alpha} Y_{s,t}(x) =0$, because $Y_{s,t}(x)$ is $\mathcal{F}_{t}$-measurable. Now, let's observe that 

\begin{equation*}
D_{\alpha}R_{t,x}(s)= \int_{0}^{s} B'_{t}(u, R_{t,x}(u)) D_{\alpha} R_{t,x} (u)du + D_{\alpha}Z_{s,t},
\end{equation*}
where we recall the notation $Z_{s,t} = -(Z_{t} - Z_{t-s})$ for $0\leq s \leq t \leq T$.
Considering the previous relation as an integral equation of functions depending on the variable $s$, regularizing $Z$ by standard mollifiers, applying the method of variations of parameters and then integration by parts, we obtain
\begin{equation*}
\begin{split}
D_{\alpha } R_{t,x} (s) =& D_{\alpha}Z_{s,t} \\
 & + e ^{ \int_{0} ^{s} B'_{t}(u, R_{t,x} (u) ) du } \int_{0} ^{s} B'_{t}(u, R_{t,x} (u) ) D_{\alpha}(Z_{u,t}) e ^{- \int_{0} ^{u} B'_{t}(r, R_{t,x} (r) )dr } du  
\end{split}
\end{equation*}
Writting this in terms of the inverse flow $Y$, 
\begin{equation*}
\begin{split}
D_{\alpha} Y_{t-s,t} (x) =& D_{\alpha}Z_{s,t} 
 + e ^{- \int_{0} ^{s} b'(t-u, Y_{t-u,x} (u) ) du } \  \cdot 
 \\
 & \int_{0} ^{s} b'(t-u,Y_{t-u,x} (u) ) D_{\alpha}(Z_{u,t}) e ^{\int_{0} ^{u} b'(t-r, Y_{t-r,t} (r) )dr } du.
\end{split}
\end{equation*}
Using the changes of variables $u' = t-u$, $r' = t-r$  and the notation $s' = t-s$  we obtain the desired result.
\end{proof}

Let us prove that the random variable $Y_{s,t}$ admits a density, for every $0\leq s\leq t\leq T$.

\begin{proposition}
Fix $0\leq s\leq t\leq T$. Assume (\ref{6m-1}) and (\ref{6m-2}). In addition we will suppose that for every $0<s<t\leq T$
\begin{equation}
\label{7m-3}
 \Vert D Z_{t} \Vert  ^ {2}_{ L^{2} ([t-s, T])} = \int_{t-s} ^{T} (D_{\alpha} Z_{t} ) ^{2} d\alpha >0, \mbox{ almost surely. }
\end{equation}
Then the law of $Y_{s,t}(x)$ is absolutely continuous with respect to the Lebesgue measure.

\end{proposition}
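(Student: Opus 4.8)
The plan is to apply the density criterion recalled in Section~2 (Theorem~2.1.3 in \cite{N}): if $F \in \mathbb{D}^{1,2}$ and $\Vert DF \Vert_{\mathcal H} > 0$ almost surely, then $F$ has a density with respect to the Lebesgue measure. Since the membership $Y_{s,t}(x) \in \mathbb{D}^{1,2}$ has already been obtained from (\ref{5m-5}) and Proposition~\ref{p2} (recall $Y_{s,t}(x) = R_{t,x}(t-s)$), the proof reduces entirely to showing that
\[
\Vert D Y_{s,t}(x) \Vert_{L^2([0,T])}^2 = \int_0^T \big( D_\alpha Y_{s,t}(x) \big)^2 \, d\alpha > 0 \qquad \text{almost surely.}
\]
To produce such a strictly positive lower bound I would exploit the closed-form expression (\ref{dy}) for the Malliavin derivative, and restrict the integration in $\alpha$ to the sub-interval $(t-s,t)$, which is precisely the range singled out by hypothesis (\ref{7m-3}).

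The heart of the argument is a simplification of (\ref{dy}) for $\alpha \in (t-s,t)$, which rests on the adaptedness of $Z$. Since $Z_r$ is $\mathcal F_r$-measurable, one has $D_\alpha Z_r = 0$ whenever $\alpha > r$. In (\ref{dy}) the noise enters only through the increments $Z_{u,t} = -(Z_t - Z_{t-u})$ for $u \in [s,t]$ and $Z_{s,t} = -(Z_t - Z_{t-s})$, whose ``past'' endpoints $t-u$ and $t-s$ all lie in $[0,t-s]$. For $\alpha \in (t-s,t)$ we therefore have $t-u \le t-s < \alpha$ and $t-s < \alpha$, so that $D_\alpha Z_{t-u} = D_\alpha Z_{t-s} = 0$ and every increment collapses to
\[
D_\alpha Z_{u,t} = D_\alpha Z_{s,t} = - D_\alpha Z_t , \qquad \alpha \in (t-s,t).
\]

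Plugging this into (\ref{dy}) and using the elementary identity
\[
\int_s^t b'(u, Y_{u,t}(x)) \, e^{\int_u^t b'(r, Y_{r,t}(x)) \, dr} \, du = e^{\int_s^t b'(r, Y_{r,t}(x)) \, dr} - 1 ,
\]
the bracket telescopes and all the terms coalesce into a single factor, leaving, for $\alpha \in (t-s,t)$,
\[
D_\alpha Y_{s,t}(x) = - \, e^{-\int_s^t b'(r, Y_{r,t}(x)) \, dr} \, D_\alpha Z_t .
\]
The exponential prefactor is strictly positive and independent of $\alpha$, hence
\[
\int_0^T \big( D_\alpha Y_{s,t}(x) \big)^2 \, d\alpha \ge \int_{t-s}^t \big( D_\alpha Y_{s,t}(x) \big)^2 \, d\alpha = e^{-2\int_s^t b'(r, Y_{r,t}(x)) \, dr} \int_{t-s}^T \big( D_\alpha Z_t \big)^2 \, d\alpha ,
\]
where the upper limit has been raised from $t$ to $T$ because $D_\alpha Z_t = 0$ for $\alpha > t$. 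By (\ref{7m-3}) the last integral is strictly positive almost surely, so $\Vert D Y_{s,t}(x) \Vert_{L^2([0,T])} > 0$ a.s., and the criterion gives the absolute continuity of the law of $Y_{s,t}(x)$.

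The step I expect to be the main obstacle is precisely this reduction of (\ref{dy}) on $(t-s,t)$: one must check with care that on that interval the drift-generated noise terms $D_\alpha Z_{t-u}$ genuinely vanish while the ``final'' term $D_\alpha Z_t$ survives, and that the drift factors telescope into the single exponential above. This is also what pins down the lower endpoint $t-s$ in hypothesis (\ref{7m-3}). Since (\ref{dy}) is itself produced by a formal variation-of-parameters and integration by parts that is rigorous only after regularizing $Z$ by mollifiers (as in the preceding proof), the delicate point is to verify that both the collapse of the increments and the telescoping of the drift factor are preserved in the limit $\varepsilon \to 0$. Everything else --- the membership in $\mathbb{D}^{1,2}$, the positivity of the exponential, and the invocation of the criterion --- is routine.
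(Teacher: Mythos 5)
Your overall strategy is exactly the paper's: invoke the density criterion of Theorem 2.1.3 in \cite{N}, restrict $\Vert DY_{s,t}(x)\Vert^2_{L^2([0,T])}$ to $\alpha\in(t-s,T)$ where the increments collapse to $-D_\alpha Z_t$, factor out $(D_\alpha Z_t)^2$, and conclude from (\ref{7m-3}). The one place where you diverge is in handling the scalar factor multiplying $D_\alpha Z_t$, and that is where there is a genuine arithmetic gap. Writing $A=\int_s^t b'(r,Y_{r,t}(x))\,dr$, your identity $\int_s^t b'(u,Y_{u,t}(x))e^{\int_u^t b'}\,du=e^{A}-1$ is correct, but substituting it into (\ref{dy}) \emph{as stated} (where the variation-of-parameters term enters with a plus sign) yields the factor $1+e^{-A}\bigl(e^{A}-1\bigr)=2-e^{-A}$, not $e^{-A}$. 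This is not harmless: $2-e^{-A}$ vanishes when $A=-\log 2$, which is attainable under (\ref{6m-1}) once $\Vert b'\Vert_\infty(t-s)\ge\log 2$, so the factor is not bounded away from zero and the proof as written does not close.

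That said, your final formula $D_\alpha Y_{s,t}(x)=-e^{-A}D_\alpha Z_t$ is what one gets by solving the linearized backward equation $D_\alpha Y_{\sigma,t}(x)=-\int_\sigma^t b'(r,Y_{r,t}(x))D_\alpha Y_{r,t}(x)\,dr-D_\alpha Z_t$ directly, with terminal value $-D_\alpha Z_t$ at $\sigma=t$; it corresponds to the integral term in (\ref{dy}) carrying a minus sign, in which case the telescoping gives $1-e^{-A}(e^{A}-1)=e^{-A}>0$ and your argument goes through, and is in fact cleaner and sharper than the paper's. The paper instead leaves the bracket unevaluated and bounds it below by $1+\inf b'\cdot(t-s)\cdot e^{-2(t-s)\Vert b'\Vert_\infty}\ge 1+f(\Vert b'\Vert_\infty(t-s))$ with $f(x)=-xe^{-2x}$, the step (\ref{25a-1}); but when $\inf b'<0$ a lower bound requires the \emph{upper} estimate $e^{+2(t-s)\Vert b'\Vert_\infty}$ of the exponential weights rather than the lower one, and your exact evaluation ($2-e^{-A}$, taking (\ref{dy}) at face value) shows that (\ref{25a-1}) cannot hold uniformly. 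So: same skeleton as the paper and a better idea for the key step, but as written your algebra is inconsistent with the formula you quote, and the intermediate claim it would need ($2-e^{-A}$ bounded away from $0$) is false. To repair it, either derive the sign-corrected version of (\ref{dy}) or solve the linear equation for $\sigma\mapsto D_\alpha Y_{\sigma,t}(x)$ directly on the region where the inhomogeneity is the constant $-D_\alpha Z_t$.
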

\begin{proof}
	By Theorem 2.1.3 in \cite{N},  we need to prove that

$$ \Vert D Y_{s,t} \Vert ^ {2} _{ L^{2} ([0, T])}= \int_{0} ^{T} (D_{\alpha} Y _{s,t} ) ^{2} d\alpha >0 \mbox{ almost surely. }$$
We will use the inequality 
$$\int_{0} ^{T} (D_{\alpha} Y _{s,t} ) ^{2} d\alpha \geq \int_{t-s} ^{T} (D_{\alpha} Y _{s,t} ) ^{2} d\alpha $$
and the fact that, for $\alpha \in (t-s, T)$ and $u\in (s,t)$, we have 
$$D_{\alpha } Z_{s,t}= -D_{\alpha } Z_{t} \mbox { and } D_{\alpha } Z_{u,t}= -D_{\alpha } Z_{t}$$ since
$D_{\alpha } Z _{t-s} = D_{\alpha } Z _{t-u}=0.$
Therefore
\begin{equation*}
\begin{split}
& \int_{0} ^{T} (D_{\alpha} Y _{s,t} ) ^{2} d\alpha \  \geq \  \int_{t-s} ^{T} (D_{\alpha} Y _{s,t} ) ^{2} d\alpha
\\
=&\int_{t-s} ^{T} \left( - e ^{ -\int_{s} ^{t} b'(u, Y_{u,t}(x) ) du } \int_{s} ^{t}  b'(u, Y_{u,t}(x)) D_{\alpha} Z_{t} e ^{\int_{u}^{t} b'(r, Y_{r,t}(x)) dr } \ du - D_{\alpha} Z_{t}\right)^{2}d\alpha 
\\
=&\int_{t-s} ^{T} (D_{\alpha } Z_{t} ) ^{2} 
\left(  e ^{ -\int_{s} ^{t} b'(u, Y_{u,t}(x) ) du } \int_{s} ^{t}  b'(u, Y_{u,t}(x)) e ^{\int_{u}^{t} b'(r, Y_{r,t}(x)) dr } \ du +1\right)^{2}d\alpha 
\end{split}
\end{equation*}

We claim that
\begin{equation}
\label{25a-1}
 e ^{ -\int_{s} ^{t} b'(u, Y_{u,t}(x) ) du } \int_{s} ^{t}  b'(u, Y_{u,t}(x)) e ^{\int_{u}^{t} b'(r, Y_{r,t}(x)) dr } \ du +1\ > \ C \ > \ 0.
\end{equation}
Indeed, considering the boundedness of $b'$, and the inequalities
\begin{equation}\label{5m-3}
e ^{-(t-s) \Vert b'\Vert _{\infty} } \leq  e^{ \pm \int_{s} ^{t} b'(a, Y_{a, t}(x) ) da }\leq e ^{(t-s) \Vert b'\Vert _{\infty} },
\end{equation}
we may note that
\begin{eqnarray*}
& & e ^{ -\int_{s} ^{t} b'(u, Y_{u,t}(x) ) du } \int_{s} ^{t}  b'(u, Y_{u,t}(x)) e ^{\int_{u}^{t} b'(r, Y_{r,t}(x)) dr } \ du \\
&\geq & \inf_{u \in (s,t); x \in \mathbb{R}} b'(u,x) \cdot (t-s) \cdot e ^{ -2(t-s) \Vert b'\Vert _{\infty} } \\
&\geq & -\Vert b'\Vert _{\infty} \cdot (t-s) \cdot e ^{ -2(t-s) \Vert b'\Vert _{\infty} } \\
&=& f(\Vert b'\Vert _{\infty} \cdot (t-s)),  
\end{eqnarray*}
with $f(x) = -x\exp(-2x)$. The function $f$ attains its minimum at $x=1/2$, with $f(1/2) = -1/2\exp(-1) > -1$, this prove our claim. The conclusion follows from condition (\ref{7m-3}). 
\end{proof}

Condition (\ref{7m-3}) ensures the existence of the density of the noise $Z_{t}$ for each $t$. This property is then transfered to the solution. 

Let us conclude the existence of the density of the solution to the transport equation.

\begin{theorem}
Let $u(t,x)$ be the solution to the transport equation (\ref{transport}). Assume that $u_{0} \in C^{1}(\mathbb{R})$ such that there exists $C>0$ with $ (u_{0}'(x)) ^ {2} \geq C$ for every $x\in \mathbb{R}$.
Then, for every $t\in [0,T]$ and for every $x\in \mathbb{R}$, the random variable $u(t,x)$ is Malliavin differentiable. Moreover $u(t,x)$ admits a density with respect to the Lebesgue measure.
\end{theorem}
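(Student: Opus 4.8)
The plan is to combine the representation (\ref{19a-1}) of the solution with the chain rule for the Malliavin derivative and the density criterion of Theorem 2.1.3 in \cite{N}. Writing $X_t^{-1}(x)=Y_{0,t}(x)$ for the inverse flow (the solution of (\ref{back}) with starting time $0$), the solution reads $u(t,x)=u_0(Y_{0,t}(x))$. Since the preceding propositions already give $Y_{0,t}(x)\in\mathbb{D}^{1,2}$ together with the explicit derivative (\ref{dy}), the first step is to apply the chain rule (\ref{chain}) with $\varphi=u_0$, which yields $u(t,x)\in\mathbb{D}^{1,2}$ and
\begin{equation*}
D_\alpha u(t,x)=u_0'(Y_{0,t}(x))\,D_\alpha Y_{0,t}(x),\qquad \alpha\in[0,T].
\end{equation*}
This settles the Malliavin differentiability, modulo the technical point discussed below.

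For the existence of the density, by Theorem 2.1.3 in \cite{N} it suffices to show $\Vert Du(t,x)\Vert_{L^2([0,T])}>0$ almost surely. Using the factorized form of the derivative,
\begin{equation*}
\Vert Du(t,x)\Vert_{L^2([0,T])}^{2}=\big(u_0'(Y_{0,t}(x))\big)^{2}\,\Vert DY_{0,t}(x)\Vert_{L^2([0,T])}^{2}\geq C\,\Vert DY_{0,t}(x)\Vert_{L^2([0,T])}^{2},
\end{equation*}
where the inequality is exactly the hypothesis $(u_0')^2\geq C$. Thus the problem reduces to the strict positivity of $\Vert DY_{0,t}(x)\Vert_{L^2([0,T])}^{2}$, which is precisely the content of the preceding proposition: its proof, based on the lower bound (\ref{25a-1}) and the non-degeneracy condition (\ref{7m-3}), shows that this quantity is strictly positive almost surely. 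Combining the two displays gives $\Vert Du(t,x)\Vert^2>0$ a.s., hence the density.

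The main obstacle is the justification of the chain rule in the first step: as stated, (\ref{chain}) requires $u_0'$ to be bounded, whereas the hypothesis only provides the lower bound $(u_0')^2\geq C$, and a $C^1$ function with derivative bounded away from zero is strictly monotone and may well have unbounded derivative. I would handle this by localization: approximate $u_0$ by a sequence $(u_0^{n})$ of $C^1$ functions with bounded derivatives, coinciding with $u_0$ on larger and larger intervals, apply (\ref{chain}) to each $u_0^{n}(Y_{0,t}(x))$, and pass to the limit in $L^2(\Omega;\mathcal{H})$, using $Y_{0,t}(x)\in\mathbb{D}^{1,2}$, the continuity of $u_0'$, and the closedness of the operator $D$.

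A second point requiring care is that the density statement for the inverse flow must be invoked for the full flow $Y_{0,t}$; one should check that the non-degeneracy of $DZ_t$ required in (\ref{7m-3}) genuinely survives in the range relevant here, so that the lower bound on $\Vert DY_{0,t}\Vert^2$ is strictly positive. As an alternative that bypasses the chain rule for the density conclusion itself, note that $u_0$ is a strictly monotone $C^1$ bijection onto its range (since $u_0'$ is continuous and never vanishes), so its inverse is $C^1$; the density of $u(t,x)=u_0(Y_{0,t}(x))$ then follows from that of $Y_{0,t}(x)$ by the change-of-variables formula for pushforwards under a $C^1$ diffeomorphism.
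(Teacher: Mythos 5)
Your proof is correct and follows essentially the same route as the paper: represent $u(t,x)=u_0(Y_{0,t}(x))$, apply the chain rule (\ref{chain}) to get $D_\alpha u(t,x)=u_0'(Y_{0,t}(x))D_\alpha Y_{0,t}(x)$, bound $\Vert Du(t,x)\Vert^2_{L^2([0,T])}$ below by $C\Vert DY_{0,t}(x)\Vert^2_{L^2([0,T])}>0$ using the hypothesis on $u_0'$ and the preceding proposition, and conclude via Theorem 2.1.3 in \cite{N}. Your observation that (\ref{chain}) as stated requires $u_0'$ to be bounded --- which the hypothesis $(u_0')^2\geq C$ does not provide --- identifies a technical point the paper's own proof silently skips, and your localization (or the alternative via strict monotonicity of $u_0$) is a legitimate way to close it.
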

\begin{proof}
	By formula (\ref{19a-1}), $u(t,x)= u_{0} (Y_{0,t}(x))$ and then  we get the Malliavin differentiability of $u(t,x)$ from Proposition \ref{p2}. The  chain rule for the Malliavin derivative  (\ref{chain}) implies
$$D_{\alpha} u(t,x)= u_{0} '( Y_{0,t} (x))  D_{\alpha } Y_{0,t} (x) $$
and then, from the above result and the condition imposed on the initial value $u_{0}$ 
$$\int_{0} ^ {T} (D_{\alpha} u(t,x) ) ^ {2} d\alpha >0$$
almost surely for every $t\in [0,T], x\in \mathbb{R}$. This implies that the random variables $u(t,x)$ admits a density.
\end{proof}

\section{An example: The Hermite process}

 In this section we will give an example of a class of stochastic processes that satisfies the conditions required for the noise $Z$ in (\ref{transport}). Recall that we assumed that the noise $Z$ is an adapted square integrable process, with zero quadratic variation in the Russo-Vallois sense, Malliavin differentiable, and satisfies (\ref{6m-2}) and (\ref{7m-3}).

The class of processes we consider is those of Hermite processes. The Hermite process of order $q\geq 1$ lives in the Wiener chaos of order $q$, and it is defined as a multiple stochastic integral with respect to the standard Brownian motion. Its representation is related to the Wiener integral representation of the fractional Brownian motion. We recall that the fractional Brownian process $(B_{t}^{H})_{t\in \lbrack 0,1]}$
  with Hurst parameter $H\in (0,1)$ can be written as
  \begin{equation}\label{2a-1}
    B_{t}^{H}=\int_{0}^{t} K^{H}(t,s) \, {\rm d}W_{s},\quad t\in \lbrack 0,1]
  \end{equation}
  where $(W_{t},t\in \lbrack 0,T])$ is a standard Wiener process, and if $H>\frac{1}{2}$, the
  kernel $K^{H}\left( t,s\right)$ has the expression 
  $$ K^{H}\left( t,s\right) = c_{H}s^{1/2-H}
  \int_{s}^{t}(u-s)^{H-3/2}u^{H-1/2} \, {\rm d}u$$ where $t>s$ and
  $c_{H}=\left( \frac{H(2H-1)}{\beta (2-2H,H-1/2)}\right) ^{1/2}$ and
  $\beta (\cdot ,\cdot )$ is the Beta function. For $t>s$, the
  kernel's derivative is
$$\frac{\partial K^{H}}{\partial
    t}(t,s)=c_{H}\left( \frac{s}{t}\right)
  ^{1/2-H}(t-s)^{H-3/2}.
$$

  We denote by $(Z_{t}^{(q,H)})_{t\in \lbrack 0,T]}$ the Hermite process with \textit{self-similarity parameter} $H\in \left(1/2,1\right) $. For $t\in [0,T]$ it is given by 
\begin{equation} \label{z1}
\begin{split}
	Z_{t}^{(q,H)} =& d(H) \int_{0}^{t}\ldots\int_{0}^{t} {\rm d}W_{y_{1}} \ldots {\rm d}W_{y_{q}} \ \cdot 
\\
&  \left( \int_{y_{1}\vee\ldots\vee y_{q}}^{t} \partial_{1} K^{H^{\prime}}(u,y_{1}) \ldots \partial_{1} K^{H^{\prime}}(u,y_{q}) {\rm d}u\right),
\end{split}
\end{equation}
where $K^{H^{\prime}}$ is the usual kernel of the fractional
    Brownian motion  that appears in (\ref{2a-1}) and
    \begin{equation*}
      H^{\prime}=1+\frac{H-1}{q}\Longleftrightarrow(2H^{\prime}-2)q=2H-2.
      \label{H'}
    \end{equation*}

The covariance of
  $Z^{\left( q,H\right) }$ is identical to that of fBm, namely
  \begin{equation}\label{26a-2}
   \mathbb{E}\left[ Z_{s}^{\left( q,H\right) }Z_{t}^{\left( q,H\right) }\right]
    =\frac{1}{2}(t^{2H}+s^{2H}-|t-s|^{2H}).
  \end{equation}
  The constant $d(H)$ is chosen to have variance equal to 1. 

  The Hermite process $Z^{(q,H)}$ is $H$-self-similar and it has
    stationary increments,  the mean square of the increment is given by
    \begin{eqnarray}
      \mathbb{E}\left[ \left\vert Z_{t}^{(q,H)}-Z_{s}^{(q,H)}\right\vert ^{2}
      \right] =|t-s|^{2H};
      \label{canonmetric}
    \end{eqnarray}
    as a consequence, using the self-similarity and the stationarity of the increments of $ Z ^ {H}$,   it follows from
    Kolmogorov's continuity criterion (see theorem 2.2.3 in \cite{Oks})  that $Z^{(q,H)}$ has
    H\"{o}lder-continuous paths of any exponent $\delta <H$. For $q=1$, $Z^{(1,H)}$ is standard fBm with Hurst parameter
    $H$, while for $q\geq 2$ the Hermite process is not Gaussian. In
    the case $q=2$ this stochastic process is known as the
      Rosenblatt process.

We will use the notation $Z^{H}:= Z ^{(q,H)}.$ Also, denote by $L^{H}$ the kernel of the Hermite process
\begin{equation}
\label{26a-3}L^{H}_{t}(y_{1},\ldots , y_{q})=1_{(y_{1}\vee\ldots\vee y_{q}\leq t)}\int_{y_{1}\vee\ldots\vee y_{q}}^{t} \partial_{1}
        K^{H^{\prime}}(u,y_{1}) \ldots \partial_{1}
        K^{H^{\prime}}(u,y_{q}) {\rm d}u.
\end{equation}
We can write $ Z ^{H}_{t}= I_{q} (L ^ {H})$ with $I_{q}$ being the multiple integral of order $q$ with respect to the Wiener process $W$. We refer the reader to the manuscript \cite{Tudor2013} and references there in for a deeper discussion on Hermite processes and other self-similar processes.

It is immediate to see that $Z ^ {H}$ has zero quadratic variation as defined in Section 2.
\begin{lemma}
$Z^{H}$ is a zero quadratic variation process.

\end{lemma}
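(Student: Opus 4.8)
The plan is to work directly from the definition of the generalized bracket given in Section 2.1 and to exploit the only two facts about $Z^{H}$ that we actually need: that its increments are stationary with $\mathbb{E}\left[(Z^{H}_{t}-Z^{H}_{s})^{2}\right]=|t-s|^{2H}$ (equation (\ref{canonmetric})), and that $H>1/2$. First I would observe that the approximating bracket
\begin{equation*}
[Z^{H},Z^{H}]_{\varepsilon,t}=\frac{1}{\varepsilon}\int_{0}^{t}\left(Z^{H}_{s+\varepsilon}-Z^{H}_{s}\right)^{2}\,ds
\end{equation*}
is, for each fixed $\varepsilon>0$, a nonnegative and nondecreasing function of $t$, since its integrand is nonnegative. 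Consequently $\sup_{t\in[0,T]}[Z^{H},Z^{H}]_{\varepsilon,t}=[Z^{H},Z^{H}]_{\varepsilon,T}$, which reduces the verification of the \emph{ucp} limit to the control of a single random variable.

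The key computation is then to take expectations and use the stationarity of the increments:
\begin{equation*}
\mathbb{E}\,[Z^{H},Z^{H}]_{\varepsilon,T}=\frac{1}{\varepsilon}\int_{0}^{T}\mathbb{E}\left(Z^{H}_{s+\varepsilon}-Z^{H}_{s}\right)^{2}\,ds=\frac{1}{\varepsilon}\int_{0}^{T}\varepsilon^{2H}\,ds=T\,\varepsilon^{2H-1}.
\end{equation*}
Because $H>1/2$, the exponent $2H-1$ is strictly positive, so $T\,\varepsilon^{2H-1}\to 0$ as $\varepsilon\to 0$.

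To conclude, I would pass from this $L^{1}$ bound to the required \emph{ucp} convergence. Since the running supremum equals $[Z^{H},Z^{H}]_{\varepsilon,T}\geq 0$ and its expectation tends to zero, Markov's inequality gives, for every $\eta>0$, $\mathbb{P}\big(\sup_{t\in[0,T]}[Z^{H},Z^{H}]_{\varepsilon,t}>\eta\big)\leq T\,\varepsilon^{2H-1}/\eta\to 0$. This is precisely the statement that $[Z^{H},Z^{H}]_{\varepsilon,\cdot}\to 0$ uniformly on compacts in probability, that is $[Z^{H},Z^{H}]\equiv 0$, so $Z^{H}$ is a zero quadratic variation process.

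There is essentially no hard step here: the monotonicity of $t\mapsto[Z^{H},Z^{H}]_{\varepsilon,t}$ turns the uniform-in-$t$ control into a single first-moment estimate, and the decay comes for free from the assumption $H>1/2$. The only point deserving a word of care is the justification that convergence of the running supremum in $L^{1}$ yields genuine \emph{ucp} convergence rather than mere convergence at each fixed $t$; the monotonicity observation is exactly what removes this difficulty.
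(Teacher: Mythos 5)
Your proof is correct and follows essentially the same route as the paper, whose entire argument is the single computation $\frac{1}{\varepsilon}\mathbb{E}\int_{0}^{t}(Z^{H}_{s+\varepsilon}-Z^{H}_{s})^{2}\,ds=t\,\varepsilon^{2H-1}\to 0$ based on (\ref{canonmetric}). Your additional observations (monotonicity in $t$ of the approximating bracket plus Markov's inequality to upgrade the first-moment bound to genuine \emph{ucp} convergence) simply make explicit a step the paper leaves to the reader.
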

\begin{proof}
	Indeed, from (\ref{canonmetric}),
$$\frac{1}{\varepsilon} \mathbb{E}\int_{0} ^{t} (Z^{H}_{s+\varepsilon} -Z^{H} _{s}) ^{2}ds =t \varepsilon ^{2H-1}\to _{\varepsilon \to 0}0.$$
\end{proof}

We will show that the process $Z^ {H}$ satisfies the assumptions imposed throughout the paper for the noise $Z$ appearing in (\ref{transport}). Since it is defined as a multiple integral, the random variable $Z ^ {H}_{t}$ is clearly Malliavin differentiable for every $t\in [0,T]$.

\begin{lemma}
The Hermite process satisfies (\ref{6m-2}) and (\ref{7m-3}).
\end{lemma}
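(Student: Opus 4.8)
The plan is to read everything off from the representation $Z^{H}_{t}=I_{q}(L^{H}_{t})$ as a multiple Wiener integral of order $q$ and from the standard identity $D_{\alpha}I_{q}(f)=q\,I_{q-1}(f(\cdot,\alpha))$ for the Malliavin derivative of such an integral, where $f(\cdot,\alpha)$ denotes the symmetric kernel with one argument frozen at $\alpha$. The bound on the second moment in (\ref{6m-2}) is immediate: taking $s=t$ in (\ref{26a-2}) gives $\mathbb{E}|Z^{H}_{t}|^{2}=t^{2H}$, so $\sup_{t\le T}\mathbb{E}|Z^{H}_{t}|^{2}=T^{2H}<\infty$. For the derivative term, $D_{\alpha}Z^{H}_{t}=q\,I_{q-1}(L^{H}_{t}(\cdot,\alpha))$, and the isometry for multiple integrals yields $\mathbb{E}(D_{\alpha}Z^{H}_{t})^{2}=q^{2}(q-1)!\,\|L^{H}_{t}(\cdot,\alpha)\|^{2}_{L^{2}([0,T]^{q-1})}$. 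Integrating in $\alpha$ and using $q!\,\|L^{H}_{t}\|^{2}_{L^{2}([0,T]^{q})}=\mathbb{E}|Z^{H}_{t}|^{2}=t^{2H}$, I obtain
\begin{equation*}
\mathbb{E}\,\|DZ^{H}_{t}\|^{2}_{L^{2}([0,T])}=q^{2}(q-1)!\,\|L^{H}_{t}\|^{2}_{L^{2}([0,T]^{q})}=q\,t^{2H},
\end{equation*}
whose supremum over $t\in[0,T]$ is $qT^{2H}<\infty$, settling (\ref{6m-2}).

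Next I turn to the nondegeneracy condition (\ref{7m-3}), which is the heart of the statement. Two reductions are key. First, because of the indicator $1_{(y_{1}\vee\cdots\vee y_{q}\le t)}$ in (\ref{26a-3}), the frozen kernel $L^{H}_{t}(\cdot,\alpha)$ vanishes identically whenever $\alpha>t$; hence $D_{\alpha}Z^{H}_{t}=0$ for $\alpha>t$, and the integral in (\ref{7m-3}) reduces to $\int_{t-s}^{t}(D_{\alpha}Z^{H}_{t})^{2}\,d\alpha$. Second, for $\alpha\in(t-s,t)$ the kernel $L^{H}_{t}(\cdot,\alpha)$ is \emph{not} identically zero: since $H'=1+(H-1)/q>1/2$, the derivative $\partial_{1}K^{H'}(u,\cdot)$ is strictly positive for arguments below $u$, so $L^{H}_{t}(\cdot,\alpha)$ is strictly positive on the set where all free variables lie in $(0,\alpha)$, a set of positive measure.

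From here the argument splits by the order $q$. When $q=1$ the derivative $D_{\alpha}Z^{H}_{t}=K^{H}(t,\alpha)$ is deterministic and strictly positive on $(0,t)$, so the integral is a strictly positive constant and (\ref{7m-3}) holds trivially. When $q\ge2$, for each fixed $\alpha\in(t-s,t)$ the random variable $D_{\alpha}Z^{H}_{t}=q\,I_{q-1}(L^{H}_{t}(\cdot,\alpha))$ is a \emph{nonzero} element of the Wiener chaos of order $q-1\ge1$; it is classical that such a random variable has an absolutely continuous law, so $\mathbb{P}(D_{\alpha}Z^{H}_{t}=0)=0$. Using joint measurability of $(\omega,\alpha)\mapsto D_{\alpha}Z^{H}_{t}(\omega)$ and Tonelli's theorem,
\begin{equation*}
\mathbb{E}\int_{t-s}^{t}\mathbf{1}_{\{D_{\alpha}Z^{H}_{t}=0\}}\,d\alpha=\int_{t-s}^{t}\mathbb{P}(D_{\alpha}Z^{H}_{t}=0)\,d\alpha=0,
\end{equation*}
so almost surely the set of $\alpha\in(t-s,t)$ where $D_{\alpha}Z^{H}_{t}$ vanishes has zero Lebesgue measure, forcing $\int_{t-s}^{t}(D_{\alpha}Z^{H}_{t})^{2}\,d\alpha>0$ almost surely. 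The main obstacle is exactly this last passage: fixed-$\alpha$ nonvanishing does not by itself give positivity of the integral, because the exceptional null sets depend on $\alpha$; it is resolved by combining the density property of nonzero chaos elements with the Fubini/Tonelli interchange, the other delicate point being the genuine non-vanishing of $L^{H}_{t}(\cdot,\alpha)$ deduced from the strict positivity of the fractional kernel's derivative.
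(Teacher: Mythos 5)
Your proof is correct. The verification of (\ref{6m-2}) is essentially identical to the paper's: both use $\mathbb{E}|Z^{H}_{t}|^{2}=t^{2H}$ from (\ref{26a-2}) and the identity $\mathbb{E}\Vert DZ^{H}_{t}\Vert^{2}_{L^{2}([0,T])}=q\,\mathbb{E}(Z^{H}_{t})^{2}=q\,t^{2H}$ via the isometry for multiple integrals. Where you genuinely diverge is on (\ref{7m-3}). The paper disposes of it in one line by citing Shigekawa's theorem (and Corollary 5.2 in \cite{No1}) as asserting that a $q$-th multiple integral with nonzero kernel ``has a density and satisfies (\ref{7m-3})''; strictly speaking that classical result gives the almost sure positivity of the \emph{full} norm $\Vert DZ^{H}_{t}\Vert_{L^{2}([0,T])}$, whereas (\ref{7m-3}) demands positivity of the norm restricted to the subinterval $[t-s,T]$, so the paper's citation is being used somewhat loosely. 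Your argument addresses exactly this point: you apply the absolute-continuity result one chaos order lower, to $D_{\alpha}Z^{H}_{t}=q\,I_{q-1}(L^{H}_{t}(\cdot,\alpha))$ for each fixed $\alpha\in(t-s,t)$ (after checking, via the strict positivity of $\partial_{1}K^{H'}$, that the frozen kernel is nonzero in $L^{2}$), and then upgrade the pointwise statement $\mathbb{P}(D_{\alpha}Z^{H}_{t}=0)=0$ to almost sure positivity of the restricted integral by the Tonelli interchange. This is a more self-contained and more careful route that actually justifies the restricted nondegeneracy the paper needs; the paper's approach buys brevity at the cost of leaving that localization implicit. Both treatments of the $q=1$ case (deterministic, strictly positive kernel) agree.
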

\begin{proof}
	Clearly, using (\ref{26a-2}) 
$$\sup_{t\in [0,T] }\mathbb{E}\vert Z^{H}_{t} \vert ^{2}= T ^{2H}<\infty.$$
Also, with $L ^ {H}$ given by (\ref{26a-3}),
$$D_{\alpha} Z^{H} _{t} = q I_{q-1} (L^{H}_{t} (\cdot, \alpha )),$$
the properties of multiple Wiener-It\^o integrals gives
\begin{equation*}
\mathbb{E} \int_{0} ^{T} (D_{\alpha } Z^{H}_{t} ) ^{2} d\alpha = q \mathbb{E} [I_{q} (L^{H}_{t})]^{2} = q \mathbb{E}( Z^{H}_{t}) ^{2} = q t^{2H}.
\end{equation*}
This implies 
$$\sup_{t\in [0,T]} \mathbb{E} \int_{0} ^{T} (D_{\alpha } Z^{H}_{t} ) ^{2} d\alpha <\infty.$$

The condition (\ref{7m-3}) is satisfied because $Z^{H}_{t}$ is a multiple integral of order $q$. A classical result by Shikegawa \cite{Shige} (see also \cite{No1}, Corollary 5.2) says that, if $q\geq 1$ is an integer, and $f$ a  of $L^{2}(\mathbb{R}^{q})$ with $\Vert f\Vert _{ L^{2}(\mathbb{R}^{q})}\not=0$ , then the $q$-multiple Wiener-It\^o integral of $f$ has a density and satisfies (\ref{7m-3}). 
\end{proof}

\begin{remark}
In the case $q=1$, that is, $Z ^ {H}$ is a fractional Brownian motion, a deeper analysis of the density of the solution to (\ref{transport}) can  be done. In particular, it is possible to prove Gaussian bounds for the density, see \cite{OT}.
\end{remark}





\medskip
Received xxxx 20xx; revised xxxx 20xx.
\medskip

\end{document}